\documentclass[10pt]{amsart}

\usepackage[utf8]{inputenc}
\usepackage[T1]{fontenc}
\usepackage[english]{babel}
\usepackage{amsmath,amssymb,amsfonts,amsthm}
\usepackage{amsmath,amsthm,amsfonts,amssymb, hyperref}
\usepackage[pdftex]{graphicx}
\usepackage{enumitem}

\newtheorem{theorem}{Theorem}[section]

\newtheorem{proposition}[theorem]{Proposition}

\newtheorem{remark}[theorem]{Remark}

\newtheorem{example}[theorem]{Example}

\theoremstyle{definition}

\begin{document}
\title[Bounds between Laplace and Steklov eigenvalues]{Bounds between Laplace and Steklov eigenvalues on nonnegatively curved manifolds}
\begin{abstract}
Consider a compact Riemannian manifold with boundary. In this short note we prove that under certain positive curvature assumptions on the manifold and its boundary the Steklov eigenvalues of the manifold are controlled by the Laplace eigenvalues of the boundary. Additionally, in two dimensions we obtain an upper bound for these Steklov eigenvalues in terms of topology of the surface without any curvature restrictions.
\end{abstract}
\author[Mikhail Karpukhin]{Mikhail Karpukhin}
\address{Department of Mathematics and Statistics, McGill University, Burnside Hall,
805 Sherbrooke Street West,
Montreal, Quebec,
Canada,
H3A 0B9}
\email{mikhail.karpukhin@mail.mcgill.ca}
\maketitle

\section{Introduction and main results}

Let $(M,g)$ be a smooth compact Riemannian manifold with smooth boundary. We consider two elliptic self-adjoint operators defined on $\partial M$. The first is the usual Laplace-Beltrami operator $\Delta = \Delta_{\partial M}$ acting on $C^\infty(\partial M)$ with respect to the induced metric $g|_{\partial M}$. Since $\partial M$ is compact, the spectrum of $\Delta$ is discrete and consists of eigenvalues which we denote by
$$
0=\lambda_1\leqslant\lambda_2\leqslant\lambda_3\leqslant\ldots,
$$
where eigenvalues are counted with multiplicities. Note that $\partial M$ is not necessarily connected, so eigenvalue $0$ might not be simple. That is the reason why we start our numeration with $\lambda_1$ as opposed to $\lambda_0$.

The second operator is Dirichlet-to-Neumann operator $\mathcal{D}$, which is defined in the following way. For any $u\in C^\infty(\partial M)$, there is a unique harmonic extension $\hat u\in C^\infty(M)$, i.e. there is a unique $\hat u$ such that $\Delta_M\hat u = 0$ and $\hat u|_{\partial M} = u$. Then one defines $\mathcal{D}(u) = \partial_n\hat u$, where $n$ is an outward unit normal vector to $\partial M$. Similarly to the Laplacian the operator $\mathcal{D}$ is elliptic self-adjoint with discrete spectrum. We denote the corresponding sequence of eigenvalues by
$$
0=\sigma_1\leqslant\sigma_2\leqslant\sigma_3\leqslant\ldots,
$$
where eigenvalues are counted with multiplicities. Note again that the numeration starts with $\sigma_1$ as opposed to $\sigma_0$. The numbers $\sigma_i$ and the sequence $\{\sigma_i\}$ are sometimes refered to as Steklov eigenvalues of $M$ and Steklov spectrum of $M$ respectively. The study of Steklov spectrum has become rather popular in the recent years, see e.g. survey paper~\cite{GP} and references therein.

The aim of this paper is to show that under certain curvature restrictions on $M$ and $\partial M$ the sequence $\{\sigma_i\}$ is majorated by the sequence $\{\lambda_i\}$. The precise statement is the following.
\begin{theorem}
\label{MainTheorem}
Let $(M,g)$ be a compact Riemannian manifold of dimension $n\geqslant 3$ with boundary. Suppose that the Weitzenb\"ock curvature on $2$-forms $W^{[2]}$ is nonnegative and the lowest $(n-2)$-curvature $c_{n-2}$ of $\partial M$ is bounded from below by a positive constant $c$. Then, if $n\geqslant 4$, for all $m\geqslant 1$
one has the inequality
\begin{equation}
\label{MainInequality}
\sigma_m\leqslant \frac{n-2}{(n-1)c}\lambda_m.
\end{equation}
Moreover, if $n=3$ then for all $m > b_0(M)$
$$
\sigma_m < \frac{2}{3c}\lambda_m,
$$ 
where $b_0(M)$ is the number of connected components of $M$.
\end{theorem}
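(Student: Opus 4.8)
The plan is to compare the two spectra through their variational characterizations and then reduce the theorem to a single pointwise inequality. Writing $\hat u\in C^\infty(M)$ for the harmonic extension of $u\in C^\infty(\partial M)$, the Rayleigh quotients of $\mathcal D$ and of $\Delta_{\partial M}$ are
\[
R_{\mathcal D}(u)=\frac{\int_M|\nabla\hat u|^2}{\int_{\partial M}u^2},\qquad
R_{\Delta}(u)=\frac{\int_{\partial M}|\nabla_{\partial M}u|^2}{\int_{\partial M}u^2},
\]
and $\sigma_m=\min_{\dim E=m}\max_{0\neq u\in E}R_{\mathcal D}(u)$, $\lambda_m=\min_{\dim E=m}\max_{0\neq u\in E}R_{\Delta}(u)$, where $E$ runs over subspaces of $C^\infty(\partial M)$. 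Hence, testing the min--max for $\sigma_m$ against the span of the first $m$ eigenfunctions of $\Delta_{\partial M}$, inequality~\eqref{MainInequality} for $n\geqslant 4$ follows once one proves the pointwise comparison of energies
\begin{equation}
\int_M|\nabla\hat u|^2\;\leqslant\;\frac{n-2}{(n-1)c}\int_{\partial M}|\nabla_{\partial M}u|^2\tag{$\star$}
\end{equation}
for every $u$ (or at least every $u$ orthogonal to the locally constant functions). On the flat unit ball, where $W^{[2]}\equiv 0$ and $c_{n-2}(\partial M)=n-2$, the degree-one spherical harmonics give equality in $(\star)$, so the estimate is sharp and there is no slack to waste in the argument below.

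To prove $(\star)$ I would run a two-step Bochner argument. \emph{Interior step.} The $1$-form $\omega:=d\hat u$ is harmonic on $M$ ($d\omega=\delta\omega=0$); integrating a Bochner/Reilly-type identity against $\omega$ produces, on one side, $\int_M|\nabla\hat u|^2$ together with an interior curvature term governed by the Weitzenb\"ock curvature $W^{[2]}$, and on the other a boundary integral quadratic in the normal part $\partial_n\hat u=\mathcal Du$, the tangential part $du$, and the shape operator $\mathrm{II}$ of $\partial M$; discarding the interior term using $W^{[2]}\geqslant 0$ leaves $\int_M|\nabla\hat u|^2\leqslant\int_{\partial M}Q(\mathcal Du,\,du,\,\mathrm{II})$. \emph{Boundary step.} One then estimates $\int_{\partial M}Q$ intrinsically on the closed $(n-1)$-manifold $\partial M$: the hypothesis $c_{n-2}(\partial M)\geqslant c$ is precisely $\mathrm{Ric}_{\partial M}\geqslant c$ (the lowest $(\dim-1)$-curvature of a manifold being its least Ricci eigenvalue), so a Lichnerowicz-type use of the Bochner formula on $\partial M$ applied to $du$ bounds the cross terms $\int_{\partial M}(\mathcal Du)\,\Delta_{\partial M}u$ and $\int_{\partial M}|\mathcal Du|^2$ by multiples of $\int_{\partial M}|\nabla_{\partial M}u|^2$. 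Splitting the tangential Hessian of $\hat u$ along the $n-1$ directions of $\partial M$ into pure-trace and trace-free parts and optimizing the resulting Cauchy--Schwarz inequality is what yields the dimensional factor $\tfrac{n-2}{n-1}$; combined with the $c$ from the Ricci bound this produces the constant in $(\star)$.

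The main obstacle, I expect, is the bookkeeping that glues the two steps: one must show that the $\mathrm{II}$-terms emerging from the interior identity cancel against, or have signs compatible with, those produced by the intrinsic boundary estimate, and that the free parameter in the completing-the-square step can be tuned so that the final coefficient is \emph{exactly} $\tfrac{n-2}{(n-1)c}$ --- the ball example forbids any wastefulness. Elliptic regularity of $\hat u$ up to $\partial M$ is needed to legitimize the boundary integrals throughout.

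For $n=3$ the interior step degenerates: a $2$-form on a $3$-manifold is Hodge-dual to a $1$-form, so $W^{[2]}$ reduces to $\mathrm{Ric}_M$ and the scheme no longer closes with the sharp constant --- a coarser version of the same argument gives only $\sigma_m<\tfrac{2}{3c}\lambda_m$. The restriction to $m>b_0(M)$ removes the $\sigma=0$ eigenvalues, for which the inequality would degenerate to the equality $0=0$; and the strictness for $m>b_0(M)$ reflects that equality throughout the Bochner chain would force $\hat u$ to be a rigid scaling-invariant harmonic function, which is incompatible with having $\lambda_m>0$ on $\partial M$.
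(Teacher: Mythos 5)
Your proposal is a strategy rather than a proof, and its central claim --- the quadratic-form inequality $(\star)$ --- is exactly the part that is never established; moreover, the sketch for it contains two concrete errors that I do not believe can be repaired in the stated framework. First, you misread the boundary hypothesis: $c_{n-2}\geqslant c$ is an \emph{extrinsic} condition on the second fundamental form of $\partial M$ (the smallest sum of $n-2$ principal curvatures), not the intrinsic bound $\mathrm{Ric}_{\partial M}\geqslant c$; by the Gauss equation the Ricci curvature of $\partial M$ involves products of principal curvatures and ambient sectional terms, so your ``boundary step'' is built on a hypothesis the theorem does not provide. Second, in your ``interior step'' the Bochner/Reilly identity applied to the $1$-form $d\hat u$ produces the curvature term $W^{[1]}=\mathrm{Ric}_M$, not $W^{[2]}$: nothing in your argument ever uses $W^{[2]}\geqslant 0$ beyond its consequence $\mathrm{Ric}\geqslant 0$, nor $c_{n-2}$ as such. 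But the Reilly-formula route with only $\mathrm{Ric}_M\geqslant 0$ and boundary convexity is precisely the Wang--Xia argument, which yields the strictly weaker bound~(\ref{ThWXeq}) discussed in Section~\ref{comparison}; since the ball forces $(\star)$ to be sharp, there is no slack with which to improve that constant to $\frac{n-2}{(n-1)c}$ by tuning a Cauchy--Schwarz parameter. Note also that $(\star)$, i.e.\ domination of the Dirichlet-to-Neumann form by a multiple of the boundary Dirichlet form, is a substantially \emph{stronger} assertion than the eigenvalue-wise comparison~(\ref{MainInequality}); it is not known to hold under the hypotheses, and the theorem does not require it.

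The actual proof is much shorter and runs through differential forms, which is exactly where $W^{[2]}$ and $c_{n-2}$ enter: Hodge duality applied to the Bochner formula gives $W^{[n-2]}*=*W^{[2]}$, so $W^{[n-2]}\geqslant 0$; Theorem~\ref{ThRS} with $p=n-2$ then yields $\sigma_1^{(n-2)}\geqslant\frac{n-1}{n-2}c$ (strictly for $n=3$, where $p=1<\frac n2$) and forces $b_{n-2}=b_{n-1}=0$; finally Theorem~\ref{ThYY} with $r=1$ gives $\sigma_m\,\sigma_1^{(n-2)}\leqslant\lambda_m$, which is the theorem (with a separate double-cover argument for non-orientable $M$, since Theorem~\ref{ThYY} assumes orientation --- admittedly a step your min--max scheme would avoid if $(\star)$ were true). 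Your explanation of the $n=3$ restriction is also off: $m>b_0(M)$ is needed so that $\lambda_m>0$ (otherwise strictness fails when $\lambda_m=\sigma_m=0$), and this works only because $b_{n-1}(M)=0$ forces the boundary of each component of $M$ to be connected; it is not merely a matter of discarding zero Steklov eigenvalues.
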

In section~\ref{background} we define the Weitzenb\"ock curvature on $p$-forms and the lowest $p$-curvatures of the boundary for all $p=1,\ldots,n$. For now, let us mention that convexity of $\partial M$ is enough for boundedness of $c_p$ for all $p$, whereas nonnegativity of the curvature operator suffices for nonnegativity of $W^{[p]}$ for all $p$.

\begin{remark}
Bound~(\ref{MainInequality}) is sharp for $m=2$ on a Euclidean ball $\mathbb{B}^n(\frac{1}{c})$ of radius $\frac{1}{c}$. Indeed, on the standard sphere $\mathbb{S}^{n-1}(\frac{1}{c})$ of radius $\frac{1}{c}$ all principal curvatures are constant and equal to $c$, therefore, $c_{n-2} = (n-2)c$ (see definition in Section~\ref{background}). Additionally, it is well-known that $\sigma_2(\mathbb{S}^{n-1}(\frac{1}{c})) = c$, while $\lambda_2(\mathbb{S}(\frac{1}{c})) = (n-1)c^2$. Putting all these numbers together, one obtains an equality in~(\ref{MainInequality}). Moreover, both those eigenvalues have multiplicity $n$, therefore, we have that inequality~(\ref{MainInequality}) is sharp for all $m\leqslant n+1$. However, as it follows from the remark after Theorem~\ref{ThRS}, the equality in inequality~(\ref{MainInequality}) can only be achieved on a ball, i.~e. for $m>n+1$ inequality~(\ref{MainInequality}) is strict.
\end{remark}
\begin{remark}
For a fixed manifold $M$ bound of the type~(\ref{MainInequality}), i.e. inequality bounding $\sigma_m$ in terms of a linear function of $\lambda_m$, can not possibly be sharp for all $k$. Indeed, according to Weyl's law (see e.g.~\cite{GP}) $\sigma_m\sim m^{\frac{1}{n-1}}$, whereas $\lambda_m\sim m^{\frac{2}{n-1}}$. 
\end{remark}

A curious feature of the proof is that it uses the extension of the Dirichlet-to-Neumann map to the space of differential forms of higher degree defined by Raulot and Savo~\cite{RS}, whereas the final statement does not refer to differential forms. 

In case $M$ is a surface we have the following theorem.
\begin{theorem}
\label{Th2}
Let $(M,g)$ be a connected oriented Riemannian surface. Then one has
\begin{equation}
\label{steklov2}
\sigma_{p+1}\sigma_{q+1}L^2(\partial M)\leqslant
\begin{cases}
\pi^2(p+q+2\gamma+2k-3)^2 &\text{if  $p+q \equiv 1$} \\
\pi^2(p+q+2\gamma+2k-2)^2 &\text{if  $p+q \equiv 0$}
\end{cases}
\pmod{2},
\end{equation}
where $L(\partial M)$ is the length of $\partial M$, $k$ is the number of boundary components and $\gamma$ is the genus of $M$.
In particular, when $p=q$ one obtains,
\begin{equation}
\label{steklov}
\sigma_{p+1}L(\partial M) \leqslant 2\pi(p+\gamma+k-1).
\end{equation}
\end{theorem}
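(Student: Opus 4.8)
The plan is to use the variational characterization of Steklov eigenvalues together with a suitable family of trial functions constructed from meromorphic (or rational) maps to the sphere. Recall the min-max formula
$$
\sigma_{m}=\min_{\substack{E\subset C^\infty(M)\\ \dim E = m}}\ \max_{0\neq u\in E}\ \frac{\int_M|\nabla u|^2\,dv_g}{\int_{\partial M}u^2\,ds},
$$
where the minimum is over subspaces $E$ orthogonal to constants on $\partial M$ when $m\geq 2$. The key point is that the Dirichlet energy $\int_M|\nabla u|^2$ is a conformal invariant in dimension two, so we may replace $g$ by any conformal metric; in particular the energy of a trial function depends only on the conformal (equivalently, the Riemann-surface) structure of $M$.

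First I would invoke the existence of a holomorphic branched cover $\phi\colon M\to \mathbb{D}$ (or rather a proper holomorphic map to the disc, equivalently a meromorphic function on the doubled surface $\widehat M$ respecting the involution) of controlled degree. By the Riemann–Roch theorem, or more directly by Gabard's theorem on the existence of low-degree circle maps for bordered Riemann surfaces, a connected oriented surface of genus $\gamma$ with $k$ boundary components admits such a map of degree at most $\gamma+k$. Composing $\phi$ with Möbius transformations of the disc and with the coordinate functions, one produces, for each integer $p$, a family of trial functions; the crucial counting is that the pullbacks of spherical harmonics of degree $\leq p$ under a degree-$d$ map span a space whose dimension grows linearly, and arranging this to exceed $m=p+1$ while keeping the energy controlled is what forces the bound $p+\gamma+k-1$.

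Second, I would estimate the numerator: the Dirichlet energy of the pullback of a linear coordinate function under $\phi$ is at most (degree of $\phi$) times the area of the target, i.e. bounded by $d$ times the energy of the identity on the disc. Balancing this against the normalization $\int_{\partial M}u^2\,ds$, where one uses that $\phi$ maps $\partial M$ to $\partial \mathbb{D}$ and a Hersch-type renormalization (applying a Möbius transformation so that the trial functions are orthogonal to constants on $\partial M$ with respect to arclength) makes the denominator comparable to $L(\partial M)$, yields inequality~(\ref{steklov}). The general two-index inequality~(\ref{steklov2}) comes from the same construction by taking products of trial functions associated to two independent renormalizations, with the parity cases $p+q\equiv 0,1\pmod 2$ reflecting whether the relevant spherical-harmonic space has even or odd dimension.

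The main obstacle I anticipate is the Hersch-type renormalization step: one must show that a Möbius transformation of the disc can be chosen so that all $p+1$ components of the renormalized trial map are $L^2(\partial M,ds)$-orthogonal to the constants simultaneously. The standard center-of-mass / topological argument (the map sending a Möbius parameter to the center of mass of the pushed-forward boundary measure extends continuously to the closed ball and has no fixed-point-free behavior on the boundary sphere, hence vanishes somewhere) works for a single coordinate system but must be run in the presence of the branched structure of $\phi$ and iterated to handle all $p+1$ constraints; keeping track of how many degrees of freedom this consumes is precisely what produces the linear-in-$(p+\gamma+k)$ form of the bound rather than a weaker estimate. A secondary technical point is ensuring the trial functions remain linearly independent after renormalization, which I would handle by a dimension count using the fact that a nonzero element of the span vanishing on $\partial M$ would be a harmonic function vanishing on the boundary, hence identically zero.
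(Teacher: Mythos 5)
There is a genuine gap: the method you sketch cannot produce the constant claimed in the theorem. Your construction (conformal invariance of the Dirichlet energy plus a proper holomorphic map $\phi\colon M\to\mathbb{D}$ of degree at most $\gamma+k$ furnished by Gabard's theorem, followed by a Hersch-type renormalization and HPS-style trial functions pulled back through $\phi$) is precisely the Girouard--Polterovich scheme, and what it yields is the \emph{multiplicative} bound $\sigma_{p+1}\sigma_{q+1}L^2(\partial M)\leqslant \pi^2(p+q)^2(\gamma+k)^2$ (up to the parity adjustment): the degree of $\phi$ enters as a factor multiplying the energy of each trial function, so the topology multiplies $(p+q)$ rather than being added to it. The statement you are asked to prove has the \emph{additive} form $\pi^2(p+q+2\gamma+2k-2)^2$, which for large $p+q$ is strictly stronger; the paper explicitly notes that the Girouard--Polterovich inequality is weaker except in a few degenerate cases. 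Your sentence asserting that the dimension count ``forces the bound $p+\gamma+k-1$'' is the crux, and it is not substantiated: no mechanism in the sketch converts the degree-$(\gamma+k)$ energy loss into an additive shift of the spectral index, and the parity dichotomy in~(\ref{steklov2}) is likewise left without an actual source.

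For comparison, the paper's route is entirely different and does not use trial functions at this stage at all. It specializes the Yang--Yu inequality (Theorem~\ref{ThYY}), which bounds the product of a Steklov eigenvalue on functions and a Steklov eigenvalue on $(n-2)$-forms by a Laplace eigenvalue of the boundary, to $n=2$: this gives $\sigma_{p}\sigma_{q+1}\leqslant\lambda_{p+q+2\gamma+k-2}$, where the index shift $2\gamma+k-1=b_1(M)+b_0(M)$ is exactly where the additive topological contribution comes from. Since $\partial M$ is a disjoint union of $k$ circles, the boundary Laplace spectrum is explicit (the squared entries of the sequence $S_l$ built from the boundary lengths), and the theorem then follows from the purely combinatorial optimization $\sup_l |l|\,S_l[d]=d+k-1$ (Proposition~\ref{algebra}). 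If you want to salvage your approach you would in effect have to reprove the Yang--Yu inequality, whose proof uses the Dirichlet-to-Neumann operator on forms and the Hodge structure, not sphere-valued trial maps.
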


\begin{remark} Theorem~\ref{Th2} is a consequence of Theorem~\ref{ThYY} below. As it is explained in~\cite{YY} Theorem~\ref{ThYY} can be seen as a generalization of Hersh-Payne-Schiffer inequality~\cite{HPS}, which is inequality~(\ref{steklov2}) for simply connected planar domains, i.~e. $k=1$, $\gamma = 0$. Theorem~\ref{Th2} makes the connection more evident. A particular case $k=1$ of inequality~(\ref{steklov2}) have already been pointed out in~\cite{YY}. A similar inequality was obtained by Girouard and Polterovich in~\cite{GP2}
$$
\sigma_{p+1}\sigma_{q+1}L^2(\partial M)\leqslant
\begin{cases}
\pi^2(p+q-1)^2(\gamma+ k)^2 &\text{if  $p+q \equiv 1$} \\
\pi^2(p+q)^2(\gamma+k)^2 &\text{if  $p+q \equiv 0$}
\end{cases}
\pmod{2}.
$$
However, one can easily see that inequality~(\ref{steklov2}) yields a better upper bound unless either $\gamma = 0$, $k=1$ or $(p,q) = (1,1), (2,1)$, when both inequalities yield the same bound. Let us also mention a remarkable series of papers by Fraser and Schoen~\cite{FS1, FS2}, where the authors investigate the optimal upper bound of the form~(\ref{steklov}) for $p=1$.
Note that in the mentioned papers the enumeration of Steklov eigenvalues starts with $\sigma_0$, which is compensated by the fact that we have $p+1$ and $q+1$ in the left hand side of~(\ref{steklov2}).
\end{remark}
\begin{remark}
Bound~(\ref{steklov}) has an advantage of being linear in all the parameters involved. In fact, Hassanezhad~\cite{Asma} proved that there exists a bound of the form
$$
\sigma_{p+1}L(\partial M)\leqslant A\gamma + B p
$$
with implicit universal constants $A$ and $B$. Thus, by introducing linear dependence on $k$ we are able to make the constants in the above bound explicit.
Let us also mention that for $p=1$, Kokarev proved in~\cite{Kokarev} an upper bound
$$
\sigma_2L(\partial M)\leqslant 8\pi\left[\frac{\gamma+3}{2}\right].
$$
\end{remark}

The paper is organized as follows. Section~\ref{background} contains necessary background from differential geometry. In Section~\ref{DNoperator} we describe the extension of the operator $\mathcal{D}$ to differential forms due to Raulot and Savo. Theorem~\ref{Th2} is proved in Section~\ref{digresion}. We prove Theorem~\ref{MainTheorem} for orientable manifolds in Section~\ref{proof} and in the subsequent section we show how to generalise this proof to nonorientable manifolds. Finally in the last section we compare our results to a similar result due to Wang and Xia~\cite{WX}.  

\section{Background in differential geometry}

\label{background}
Let $\nabla$ denote the Levi-Civita connection on $M$ associated to the metric $g$. The operator $\nabla$ and the metric $g$ have natural extensions to the bundle $\bigwedge^p(T^*M)$ of differential $p$-forms on $M$ for all $p=1,\ldots,n$, which will be denoted by the same letters. Let $\nabla^*$ be the formal adjoint of $\nabla$. Then the Weitzenb\"ock curvature $W^{[p]}$ on $p$-forms is defined by the Bochner formula
\begin{equation}
\label{Bochner}
\Delta_M\omega = \nabla^*\nabla\omega + W^{[p]} \omega,
\end{equation}
where $\omega\in\Omega^p(M) = \Gamma(\bigwedge^p(T^*M))$ and $\Delta$ is the Hodge Laplacian on $\Omega^p(M)$.

According to~\cite{Peterson} the condition $W^{[2]}\geqslant 0$ can be expressed in terms of Riemann curvature tensor $R(\cdot,\cdot,\cdot,\cdot)$. In fact, non-negativity of $W^{[2]}$ is equivalent to requiring non-negativity of the following expressions:
\begin{itemize}
\item {\em Second Ricci curvature:}
\begin{equation}
\label{2ndRC}
R(x,u,u,x) + R(y,u,u,y) \geqslant 0
\end{equation}
for any orthonormal triplet $x,u,y$.
\item {\em Isotropic curvature:}
$$
R(x,u,u,x) + R(y,v,v,y) + R(x,v,v,x) + R(y,u,u,y) + 2R(x,y,u,v) \geqslant 0
$$
for any orthonormal quadruplet $x,u,y,v$.
\end{itemize}
By summing inequality~(\ref{2ndRC}) over $x,y$ from some orthonormal basis of $u^\perp$ in the tangent space of $M$, one obtains $Ric(u,u)\geqslant 0$, i.~e. $W^{[2]}\geqslant 0$ implies $Ric \geqslant 0$. Note that the non-negativity of the curvature operator implies non-negativity of $W^{[p]}$ for all $p$, see e.g. the same paper~\cite{Peterson}.

The principal curvatures $\eta_1,\ldots,\eta_{n-1}$ of $\partial M\subset M$ are defined as eigenvalues of the second fundamental form of $\partial M$. Then the lowest $p$-curvature $c_p$ of $\partial M$ is defined as $\min_{i_1,\ldots,i_p} (\eta_{i_1} + \ldots +\eta_{i_p})$, where $\{i_1,\ldots,i_p\}$ range over all $p$-element subsets of $\{1,\ldots, n-1\}$. This way $c_1>0$ is equivalent to the convexity of $\partial M$ and $c_{n-1}$ is proportional to the mean curvature of $\partial M$ in $M$.

\section{Dirichlet-to-Neumann operator on differential forms}

\label{DNoperator}
There are several definitions of Dirichlet-to-Neumann operator on forms, see e.g.~\cite{BS, JL, RS, SS}. Here we discuss the one due to Raulot and Savo. For $\omega\in\Omega^p(\partial M)$ there exists a unique differential form $\hat\omega\in\Omega^p(M)$, see e.g.~\cite{Sch}, such that 
$$
\Delta\hat\omega = 0,\quad \hat\omega |_{\partial M} = \omega, \quad i_n\hat\omega = 0,
$$
where $i_n\hat\omega$ stands for contraction of $\hat\omega$ with the unit outer normal vector field $n$, i.e. $\hat\omega(\cdot,\cdot,\ldots,\cdot) = \hat\omega(n,\cdot,\ldots,\cdot)$, which (since $n$ is only defined on the boundary) yields a well-defined element of $\Omega^{p-1}(\partial M)$.
Then the Dirichlet-to-Neumann operator $\mathcal{D}^{(p)}$ on $\Omega^p(\partial M)$ is defined as $\mathcal{D}^{(p)}\omega = i_nd\hat\omega$. Raulot and Savo proved that $\mathcal{D}^{(p)}$ is a positive elliptic self-adjoint operator and therefore its spectrum consists of a sequence of eigenvalues
$$
0\leqslant\sigma_1^{(p)}\leqslant\sigma_2^{(p)}\leqslant\sigma_3^{(p)}\leqslant\ldots,
$$
where eigenvalues are written with multiplicities. Since $\mathcal{D}^{(0)} = \mathcal{D}$, we sometimes interchange notations $\sigma_i^{(0)}$ and $\sigma_i$ for the $i$-th eigenvalue of the Dirichlet-to-Neumann operator on $C^\infty(\partial M)$. As in the case of $\mathcal{D}$, the numbers $\sigma_i^{(p)}$ are sometimes referred to as Steklov eigenvalues of $M$ on differential $p$-forms.

This particular definition of $\mathcal{D}^{(p)}$ is of interest to us due to the following two theorems.

\begin{theorem}[Yang, Yu~\cite{YY}] 
\label{ThYY}
Let $(M,g)$ be a compact oriented $n$-dimensional Riemannian manifold with nonempty boundary. Let $\sigma^{(p)}_m$ be the $m$-th Steklov eigenvalue on differential $p$-forms of $M$ and $\lambda_m$ be the $m$-th eigenvalue for the Laplacian operator on $\partial M$. Then for any two positive integers $m$ and $r$, one has 
\begin{equation}
\label{ThYYeq}
\sigma_m^{(0)}\sigma^{(n-2)}_{b_{n-2}+r}\leqslant \lambda_{m+r+b_{n-1}-1},
\end{equation}
where $b_k$ is the $k$-th Betti number of $M$.
\end{theorem}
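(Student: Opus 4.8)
The plan is to deduce the bound from the variational description of the form-valued Steklov spectrum together with a dimension count on the low eigenfunctions of $\Delta_{\partial M}$, using the exterior derivative on $\partial M$ as the bridge between degrees $0$ and $n-2$. The first point to record is the Rayleigh-quotient characterization: writing $\hat\omega$ for the Raulot--Savo extension of $\omega\in\Omega^p(\partial M)$ and $\delta$ for the codifferential, Green's formula with $\Delta\hat\omega=0$ and $i_n\hat\omega=0$ gives $\int_{\partial M}\langle\mathcal{D}^{(p)}\omega,\omega\rangle=\int_M\big(|d\hat\omega|^2+|\delta\hat\omega|^2\big)=:Q^{(p)}(\omega)$, so that
$$
\sigma^{(p)}_k=\inf_{\dim V=k}\ \sup_{0\ne\omega\in V}\ \frac{Q^{(p)}(\omega)}{\|\omega\|^2_{L^2(\partial M)}},
$$
the first $b_p(M)$ eigenvalues being $0$ with eigenspace the set of tangential restrictions of the closed and coclosed $p$-forms on $M$ that have vanishing normal part. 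Such restrictions are closed forms on $\partial M$, hence $L^2$-orthogonal to every coexact form; this is what the shift $b_{n-2}+r$ records, since for any $u\in C^\infty(\partial M)$ the $(n-2)$-form $\star^{\partial M}d^{\partial M}u$ is coexact, has the same $L^2$-norm as $d^{\partial M}u$, and is therefore automatically orthogonal to the $b_{n-2}(M)$-dimensional kernel of $\mathcal{D}^{(n-2)}$.

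By min-max for $\Delta_{\partial M}$, it suffices to produce, in the span $E$ of the first $N:=m+r+b_{n-1}(M)-1$ eigenfunctions of $\Delta_{\partial M}$, a nonzero $u$ with $\|d^{\partial M}u\|^2\ge\sigma^{(0)}_m\,\sigma^{(n-2)}_{b_{n-2}+r}\,\|u\|^2$: this is impossible for $u\in E$ once $\lambda_N<\sigma^{(0)}_m\sigma^{(n-2)}_{b_{n-2}+r}$. I would obtain such a $u$ by imposing on $E$ the $m-1$ conditions that $u$ be $L^2$-orthogonal to the first $m-1$ eigenfunctions of $\mathcal{D}^{(0)}$, and the $r-1$ conditions that $\star^{\partial M}d^{\partial M}u$ be $L^2$-orthogonal to the eigenforms of $\mathcal{D}^{(n-2)}$ of indices $b_{n-2}(M)+1,\dots,b_{n-2}(M)+r-1$; since $N-(m-1)-(r-1)=b_{n-1}(M)+1\ge1$ a nonzero $u$ survives, and for it the two Rayleigh-quotient inequalities give $\int_M|\nabla\hat u|^2\ge\sigma^{(0)}_m\|u\|^2$ for the harmonic extension $\hat u$, and $Q^{(n-2)}\!\big(\star^{\partial M}d^{\partial M}u\big)\ge\sigma^{(n-2)}_{b_{n-2}+r}\,\|d^{\partial M}u\|^2$.

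The decisive and, I expect, hardest step is then to couple these two interior energies to the single boundary quantity $\|d^{\partial M}u\|^2$. I would look for an integration-by-parts identity on $M$ of Reilly/Pohozaev type relating $\int_M|\nabla\hat u|^2$, the Hodge energy $Q^{(n-2)}(\star^{\partial M}d^{\partial M}u)$ — conveniently reorganised through the closed and coclosed $(n-1)$-form $\star^M d^M\hat u$, whose normal part $i_n(\cdot)$ equals $\pm\star^{\partial M}d^{\partial M}u$ and whose tangential part equals $\pm(\mathcal{D}^{(0)}u)\,dA$ — and $\|d^{\partial M}u\|^2$, set up so that the Weitzenb\"ock and second-fundamental-form contributions cancel (the theorem carries no curvature hypothesis) and so that it combines with the two displayed inequalities to force $\|d^{\partial M}u\|^2\ge\sigma^{(0)}_m\sigma^{(n-2)}_{b_{n-2}+r}\|u\|^2$. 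Getting the Hodge-theoretic bookkeeping exactly right belongs to this step too: one must pin down where $b_{n-1}(M)$ enters and whether the $b_{n-1}(M)+1$ spare dimensions in the count must be partly spent on further orthogonality conditions tied to $H^{n-1}$. An alternative organisation worth trying is to factor the estimate through the top-degree operator: first a ``Leibniz'' product bound $\sigma^{(0)}_m\,\sigma^{(n-2)}_{b_{n-2}+r}\le\sigma^{(n-1)}_{m+r+b_{n-1}(M)-1}$ obtained from wedge products $\hat u\wedge\hat\omega$ of extensions (with $d^{\partial M}$ raising degree $n-2$ to $n-1$), and then a comparison of $\sigma^{(n-1)}$ with $\lambda$ coming from the Hodge-star identification $\Omega^{n-1}(\partial M)\cong C^\infty(\partial M)$.
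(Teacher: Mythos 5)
First, a point of reference: the paper does not prove Theorem~\ref{ThYY} at all — it is quoted from Yang--Yu~\cite{YY} and used as a black box — so your proposal has to be judged against the argument of~\cite{YY}. Your preparatory material is essentially correct: the quadratic form $Q^{(p)}(\omega)=\int_M(|d\hat\omega|^2+|\delta\hat\omega|^2)$, the identification $\dim\ker\mathcal{D}^{(p)}=b_p(M)$ with kernel the tangential traces of Neumann harmonic fields, the automatic $L^2(\partial M)$-orthogonality of coexact boundary forms to that kernel, the reduction via min-max for $\Delta_{\partial M}$, and the dimension count in the span of the first $m+r+b_{n-1}-1$ boundary eigenfunctions. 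But the proof does not close: the step you yourself single out as ``decisive'' — coupling $\int_M|\nabla\hat u|^2$ and $Q^{(n-2)}$ of your test form to the single quantity $\|d^{\partial M}u\|^2$ — is the entire content of the theorem, and neither of the routes you sketch (a Reilly/Pohozaev identity with curvature cancellation, or a wedge-product bound through $\sigma^{(n-1)}$) is carried out or obviously viable. Worse, with your choice of test form $\star^{\partial M}d^{\partial M}u$ both Rayleigh inequalities are \emph{lower} bounds on interior energies, so you would need an upper bound of their product by $\|d^{\partial M}u\|^4$, for which no mechanism is offered; and the fact that your count leaves the $b_{n-1}(M)$ dimensions unused (you ask whether they ``must be partly spent'') is a symptom that the idea which actually produces the index shift $b_{n-1}$ is missing.

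In~\cite{YY} that missing idea is the higher-dimensional harmonic conjugate. One spends exactly $b_{n-1}(M)$ of the spare conditions to kill the cohomology class of the closed and coclosed $(n-1)$-form $\star_M d\hat u$, so that $\star_M d\hat u=d\xi$ for some $(n-2)$-form $\xi$ on $M$, which (via the Hodge--Morrey decomposition) may be taken coclosed with $i_n\xi=0$, and which may further be corrected by a Neumann harmonic field, at no cost in energy, so that $\xi|_{\partial M}$ is orthogonal to $\ker\mathcal{D}^{(n-2)}$; the remaining $r-1$ conditions make $\xi|_{\partial M}$ orthogonal to the eigenforms of indices $b_{n-2}+1,\dots,b_{n-2}+r-1$, and the count $(m-1)+(r-1)+b_{n-1}$ against $m+r+b_{n-1}-1$ leaves a nonzero $u$. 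The test form for $\mathcal{D}^{(n-2)}$ is $\xi|_{\partial M}$, not $\star^{\partial M}d^{\partial M}u$, and the coupling is then elementary: since $\star^{\partial M}(\mathcal{D}u)=\pm(\star_M d\hat u)|_{\partial M}=\pm\, d^{\partial M}(\xi|_{\partial M})$, Stokes' theorem on the closed manifold $\partial M$ and Cauchy--Schwarz give
\begin{equation*}
\int_M|d\hat u|^2=\int_{\partial M}u\,\mathcal{D}u
=\mp\int_{\partial M}d^{\partial M}u\wedge\xi|_{\partial M}
\leqslant \|d^{\partial M}u\|_{L^2(\partial M)}\,\bigl\|\xi|_{\partial M}\bigr\|_{L^2(\partial M)},
\end{equation*}
while $\int_M(|d\xi|^2+|\delta\xi|^2)=\int_M|d\hat u|^2$. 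Multiplying $\sigma^{(0)}_m\|u\|^2\leqslant\int_M|d\hat u|^2$ and $\sigma^{(n-2)}_{b_{n-2}+r}\bigl\|\xi|_{\partial M}\bigr\|^2\leqslant\int_M|d\hat u|^2$ and cancelling $\bigl\|\xi|_{\partial M}\bigr\|^2$ yields $\sigma^{(0)}_m\sigma^{(n-2)}_{b_{n-2}+r}\|u\|^2\leqslant\|d^{\partial M}u\|^2\leqslant\lambda_{m+r+b_{n-1}-1}\|u\|^2$ (with the degenerate cases $d^{\partial M}u=0$ or $\xi|_{\partial M}=0$ needing a separate word, which your write-up also omits). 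Without this construction of $\xi$, or a genuine substitute for it, your argument has a gap precisely at its central step.
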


\begin{theorem}[Raulot, Savo~\cite{RS}]
\label{ThRS}
Let $(M,g)$ be a compact $n$-dimensional Riemannian manifold with nonempty boundary. Let $p=1,\ldots,n-1$. Assume that $M$ satisfies $W^{[p]}\geqslant 0$ and that $c_p\geqslant c > 0$. Then if $p<\frac{n}{2}$ one has the inequality
\begin{equation}
\label{ThRSeq1}
\sigma_1^{(p)} > \frac{n-p+1}{n-p}c.
\end{equation}
If $p\geqslant \frac{n}{2}$ one has
\begin{equation}
\label{ThRSeq2}
\sigma_1^{(p)} \geqslant \frac{p+1}{p}c.
\end{equation}
\end{theorem}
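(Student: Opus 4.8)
The plan is to obtain the lower bound from the variational description of $\sigma_1^{(p)}$ in terms of tangential harmonic extensions, combined with an integrated Bochner (Reilly–type) identity on the boundary and a Kato inequality in the interior. Fix $\omega\in\Omega^p(\partial M)$ and let $\hat\omega\in\Omega^p(M)$ be its extension with $\Delta\hat\omega=0$, $\hat\omega|_{\partial M}=\omega$, $i_n\hat\omega=0$, as in Section~\ref{DNoperator}. Applying the Green formula for $d$ to $\int_M|d\hat\omega|^2$ and the one for $\delta$ to $\int_M|\delta\hat\omega|^2$, then using $i_n\hat\omega=0$ to kill one boundary term and $\Delta\hat\omega=0$ to kill the interior term, one records the energy identity
\[
\int_{\partial M}\langle\mathcal{D}^{(p)}\omega,\omega\rangle\,dA=\int_M\bigl(|d\hat\omega|^2+|\delta\hat\omega|^2\bigr)\,dV ,
\]
so that $\sigma_1^{(p)}=\inf_{\omega\neq 0}\ \frac{\int_M(|d\hat\omega|^2+|\delta\hat\omega|^2)}{\int_{\partial M}|\omega|^2}$.

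Next I would compute $\mathcal{D}^{(p)}\omega=i_nd\hat\omega$ in terms of the normal covariant derivative. Writing $d\hat\omega=\sum_i e^i\wedge\nabla_{e_i}\hat\omega$ in an adapted orthonormal frame with $e_n=n$ along $\partial M$, and differentiating the boundary relation $i_n\hat\omega=0$ along the boundary directions (which yields $i_n\nabla_{e_i}\hat\omega=-i_{Se_i}\hat\omega$, where $S$ is the shape operator of $\partial M$), one arrives at the boundary identity
\[
\mathcal{D}^{(p)}\omega=(\nabla_n\hat\omega)^{T}+S^{[p]}\omega\qquad\text{on }\partial M ,
\]
where $S^{[p]}=\sum_i e^i\wedge i_{Se_i}$ (sum over a tangent frame of $\partial M$) is the derivation extension of $S$ to $p$-forms, whose eigenvalues are exactly the sums $\eta_{i_1}+\dots+\eta_{i_p}$ of principal curvatures; in particular $\langle S^{[p]}\omega,\omega\rangle\geq c_p|\omega|^2\geq c|\omega|^2$. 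Pairing with $\omega$, integrating over $\partial M$, and treating $\int_{\partial M}\langle(\nabla_n\hat\omega)^T,\omega\rangle=\int_{\partial M}\langle\nabla_n\hat\omega,\hat\omega\rangle$ via the Green formula $\int_M\langle\nabla^*\nabla\hat\omega,\hat\omega\rangle=\int_M|\nabla\hat\omega|^2-\int_{\partial M}\langle\nabla_n\hat\omega,\hat\omega\rangle$ together with the Bochner formula~\eqref{Bochner} (so $\nabla^*\nabla\hat\omega=-W^{[p]}\hat\omega$), one obtains
\[
\int_{\partial M}\langle\mathcal{D}^{(p)}\omega,\omega\rangle=\int_M\bigl(|\nabla\hat\omega|^2+\langle W^{[p]}\hat\omega,\hat\omega\rangle\bigr)+\int_{\partial M}\langle S^{[p]}\omega,\omega\rangle\ \geq\ \int_M|\nabla\hat\omega|^2+c\int_{\partial M}|\omega|^2 ,
\]
using $W^{[p]}\geq 0$ and $c_p\geq c$.

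Now I would bring in the Kato–type inequality $|\nabla\hat\omega|^2\geq\frac1{p+1}|d\hat\omega|^2+\frac1{n-p+1}|\delta\hat\omega|^2$, valid for every $p$-form: it comes from the orthogonal decomposition of $T^*M\otimes\Lambda^pT^*M$ whose $\Lambda^{p+1}$ and $\Lambda^{p-1}$ summands receive $d\hat\omega$ and $\delta\hat\omega$ with precisely these norm factors. With $\mu:=\max(p+1,\,n-p+1)$ this gives $\int_M|\nabla\hat\omega|^2\geq\frac1\mu\int_M(|d\hat\omega|^2+|\delta\hat\omega|^2)=\frac1\mu\int_{\partial M}\langle\mathcal{D}^{(p)}\omega,\omega\rangle$ by the energy identity; substituting into the previous display and rearranging yields $\sigma_1^{(p)}\geq\frac{\mu}{\mu-1}c$, which is $\frac{n-p+1}{n-p}c$ when $p<\frac n2$ and $\frac{p+1}{p}c$ when $p\geq\frac n2$. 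For the strict inequality when $p<\frac n2$: equality would force equality at every step, and since then $\frac1{p+1}>\frac1{n-p+1}$ the Kato step could only be an equality if $d\hat\omega\equiv 0$ for a $\sigma_1^{(p)}$-eigenform $\omega$; but then $\mathcal{D}^{(p)}\omega=i_nd\hat\omega=0$, forcing $\sigma_1^{(p)}=0$ and contradicting $\frac{n-p+1}{n-p}c>0$.

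The main obstacle is the boundary identity $\mathcal{D}^{(p)}\omega=(\nabla_n\hat\omega)^T+S^{[p]}\omega$: it requires a careful adapted-frame computation and a precise account of which hypothesis ($i_n\hat\omega=0$, harmonicity, or the outward-normal sign conventions) is used in each of the several integrations by parts — this is the "Reilly formula for forms" bookkeeping. Once it is in place, the identification $\min\operatorname{spec}S^{[p]}=c_p$, the elementary Kato inequality, and the one-line rigidity argument for strictness when $p<\frac n2$ are routine; the case $p\geq\frac n2$ only claims a non-strict bound, consistent with equality being attainable (e.g.\ on a Euclidean ball).
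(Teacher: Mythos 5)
This theorem is not proved in the paper at all: it is imported as a black box from Raulot--Savo~\cite{RS}, so there is no internal proof to compare with. Your argument is correct and is essentially the original Raulot--Savo route: the energy identity $\int_{\partial M}\langle\mathcal{D}^{(p)}\omega,\omega\rangle=\int_M(|d\hat\omega|^2+|\delta\hat\omega|^2)$ for the tangential harmonic extension, the Reilly-type boundary identity $\mathcal{D}^{(p)}\omega=(\nabla_n\hat\omega)^T+S^{[p]}\omega$ with the lowest eigenvalue of $S^{[p]}$ equal to $c_p$, the Bochner formula with $W^{[p]}\geqslant 0$, and the Gallot--Meyer refinement $|\nabla\hat\omega|^2\geqslant\frac{1}{p+1}|d\hat\omega|^2+\frac{1}{n-p+1}|\delta\hat\omega|^2$, rearranged exactly as you do; your short rigidity argument giving strictness for $p<\frac{n}{2}$ (equality would force $d\hat\omega\equiv 0$, hence $\sigma_1^{(p)}=0$) is also sound, with the only remaining care point being the sign conventions for the shape operator in the adapted-frame computation, which you correctly flag.
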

\begin{remark}
As it was pointed out in~\cite{RS}, $\dim\ker \mathcal{D}^{(p)} = b_{p}$. Since $\sigma_1^{(p)}>0$ iff $\dim\ker \mathcal{D}^{(p)} =0$, it implies that under assumptions of the theorem one has $b_p = 0$.
\end{remark}
\begin{remark}
In the same paper~\cite{RS} it is also proved that equality in~(\ref{ThRSeq2}) implies that $M$ is a Euclidean ball of radius $\frac{1}{c}$.
\end{remark}

\section{Proof of Theorem~\ref{Th2}}

\label{digresion}
For a connected oriented surface $n=2$ inequality~(\ref{ThYYeq}) with $m = p$ and $r = q$ reads
\begin{equation}
\label{corollary}
\sigma_{p}\sigma_{q+1}\leqslant \lambda_{p+q+(k+2\gamma-1)-1},
\end{equation}
where $k$ is the number of boundary components of $M$ and $\gamma$ is the genus of $M$. In this section we would like to obtain an upper bound for $\sigma_p$ independent of the spectrum of the boundary. For simplicity of exposition let us only prove inequality~(\ref{steklov}). As one could see, the proof presented below can be easily modified to cover inequality~(\ref{steklov2}). Assume that $p= q+1$, then inequality~(\ref{corollary}) takes the following form
\begin{equation}
\label{corollary2}
\sigma^2_{p+1}\leqslant \lambda_{2(p+\gamma)+k-1}
\end{equation}

Let us introduce some notation. For each vector $l = (l_1,\ldots,l_k)\in \left(\mathbb{R}_{>0}\right)^k$ we introduce the sequence $S_l$ in the following way.
For each $i=1,\ldots,k$ consider an arithmetic progression $T_i=\left\{ \frac{d}{l_i}\right\}^\infty_{d=1}$. Then $S_l$ is a nondecreasing sequence obtained by taking the union of sequences $T_i$ and reordering the entries. The sequence $S_l$ in the context of Steklov problem first appeared in the paper~\cite{GPPS}. It is easy to see that the right hand side of~(\ref{corollary2}) is $4\pi^2S_l[p+\gamma]^2$, where $l_i$ is the length of the $i$-th boundary component of $M$ and $A[n]$ denotes the $n$-th entry of a sequence $A$. Denoting by $|l|$ the $L^1$ norm of $l$, i.~e. $|l| = l_1+\ldots + l_k$, inequality~(\ref{corollary2}) implies
\begin{equation}
\label{corollary3}
\sigma^2_{p+1}|l|^2\leqslant 4\pi^2\sup\limits_{l} (|l|S_l[p+\gamma])^2.
\end{equation}
Thus, in order to obtain a universal upper bound on $\sigma_{p+1}$ it is enough to evaluate the right hand side of inequality~(\ref{corollary3}).
\begin{proposition}
\label{algebra}
For any $d\in\mathbb{N}$ one has 
$$
\sup\limits_{l} |l|S_{l}[d] = d+k-1.
$$
\end{proposition}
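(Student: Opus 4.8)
The plan is to analyze the quantity $|l|S_l[d]$ directly from the definition of $S_l$ as a merged arithmetic progression. Observe first that $S_l[d]$ is, by construction, the unique value $t>0$ for which the total count $\sum_{i=1}^k \lfloor t\, l_i\rfloor$ of progression points $\le t$ first reaches $d$; more precisely $S_l[d]$ is the $d$-th smallest element of $\bigcup_i T_i$, so $\sum_{i=1}^k \lfloor S_l[d]\, l_i\rfloor \ge d$ while for any $t < S_l[d]$ we have $\sum_i \lfloor t\, l_i\rfloor < d$ (ignoring for the moment the measure-zero coincidences when several progressions share a point; these only help the inequality and can be handled by a limiting argument). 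Writing $t = S_l[d]$ and using $\lfloor t l_i\rfloor > t l_i - 1$, we get $d \le \sum_i \lfloor t l_i\rfloor$, but to bound $|l| S_l[d] = t|l| = \sum_i t l_i$ from above we instead want a lower bound on $d$ in terms of $t l_i$. For this I would use $\lfloor t l_i\rfloor \ge t l_i - 1$ applied just below $t$: for any $\varepsilon>0$, $\sum_i \lfloor (t-\varepsilon) l_i\rfloor \le d-1$, and letting $\varepsilon\to 0^+$ (being careful at jump points) yields $\sum_i (t l_i - 1) < \sum_i \lceil t l_i \rceil - k \le$ something controlled by $d$. Cleanly: at $t=S_l[d]$ one has $\sum_i \lceil t l_i\rceil \le d + k$ — indeed each $\lceil t l_i\rceil$ counts the points of $T_i$ in $(0,t]$ plus possibly one, and summing the genuine counts gives exactly $d$ (or slightly less with coincidences), contributing at most $k$ extra from the ceilings. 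Hence $t|l| = \sum_i t l_i \le \sum_i \lceil t l_i\rceil \le d+k-1$, where the final $-1$ comes from a sharper bookkeeping: the point $S_l[d]$ itself lies in some $T_{i_0}$, so $t l_{i_0}$ is an integer and its ceiling equals its floor, saving one unit. This establishes $\sup_l |l| S_l[d] \le d+k-1$.

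For the reverse inequality it suffices to exhibit a sequence of vectors $l$ with $|l| S_l[d] \to d+k-1$. I would take $l_1 = L$ large and $l_2 = \cdots = l_k = \varepsilon$ small, normalized appropriately; as $\varepsilon\to 0$ the progressions $T_2,\ldots,T_k$ contribute their first point essentially "for free" near $0$ after rescaling, while $T_1$ contributes roughly $d-1$ points before position $\approx (d+k-1)/|l|$. A cleaner choice: fix $t$ and set each $l_i$ just below $d_i/t$ for positive integers $d_i$ with $\sum d_i = d + k - 1$, so that each $T_i$ has exactly $d_i - 1$ points in $(0,t)$ and its $d_i$-th point sits just above $t$; then $S_l[d]$ is just above $t$ while $|l|$ is just below $(d+k-1)/t$, giving $|l| S_l[d] \to d+k-1$. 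Optimizing the integer partition is irrelevant since the product $|l| S_l[d]$ is scale-invariant and the bound $d+k-1$ is hit in the limit for any valid partition.

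The main obstacle I anticipate is the careful treatment of the boundary cases — the coincidences where several of the progressions $T_i$ pass through the same point (so that $S_l[d]$ is attained with "multiplicity"), and the precise accounting of floors versus ceilings at the exact value $t = S_l[d]$. Getting the constant exactly right (the "$-1$", not "$-0$" or "$-2$") hinges on noticing that $S_l[d]\in T_{i_0}$ for some $i_0$, which removes exactly one unit of slack; a slightly sloppy estimate gives only $d+k$. I would handle the coincidences by a perturbation argument: generic $l$ (with the $l_i$ rationally independent) have no coincidences, the quantity $|l|S_l[d]$ is continuous in $l$ on the relevant region, and the supremum over generic $l$ equals the supremum over all $l$. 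Everything else is elementary manipulation of the counting function $N_l(t) = \sum_{i=1}^k \lfloor t l_i \rfloor$.
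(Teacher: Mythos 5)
Your proposal is correct in substance but takes a genuinely different route from the paper. The paper's argument is variational: it first assumes a maximizer of $|l|S_l[d]$ exists, shows by perturbing individual components $l_i$ that at a maximizer every progression $T_i$ must pass through the value $S_l[d]=M/|l|$ and that $S_l[d]>S_l[d-1]$, then counts entries below $M/|l|$ to conclude $M=d+k-1$; a separate compactness-plus-induction-on-$k$ argument (using the a priori bound $S_l[d]\leqslant kd$ to control neighbourhoods of the faces of the simplex $|l|=1$) is then required to justify that the maximizer actually exists. You instead prove the upper bound $|l|S_l[d]\leqslant d+k-1$ directly for \emph{every} $l$ by counting progression points near $t=S_l[d]$, and get the lower bound from explicit (near-)maximizers $l_i\approx d_i/t$ with $\sum d_i=d+k-1$ --- which are precisely the maximizers the paper identifies. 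This is shorter and avoids the existence-of-maximizer/compactness step altogether. One bookkeeping remark: your claim that coincidences make the sum of the counts in $(0,t]$ ``exactly $d$ or slightly less'' is backwards --- coincident points make that sum \emph{larger} than $d$ --- but this does not damage the argument: either count points in the open interval $(0,t)$ instead, where each $T_i$ contributes $\lceil t l_i\rceil-1$ points and there are at most $d-1$ in total, giving $\sum_i\lceil t l_i\rceil\leqslant d+k-1$ and hence $t|l|\leqslant d+k-1$ for all $l$ with no genericity needed; or carry out your genericity-plus-continuity fix, which also works since for rationally independent $l_i$ the count in $(0,t]$ is exactly $d$ and exactly one ceiling collapses to a floor.
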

\begin{proof}
Let us first assume that there exists a vector $l=(l_1,\ldots, l_k)$ maximizing the fucntion $|l|S_{l}[d]$ and let $M$ denote the corresponding maximal value. 

{\bf Claim 1.} For each $i=1,\ldots,k$ there is $d_i\in\mathbb{N}$ such that $\frac{d_i}{l_i} = \frac{M}{|l|}$. Indeed, let $d_i$ be the smallest integer, such that $\frac{d_i}{l_i}\geqslant \frac{M}{|l|}$. Assume that $\frac{d_i}{l_i}$ is strictly greater than $\frac{M}{|l|}$. We will show that in that case $M$ is not a maximum. Indeed, by definition of $d_i$ it is easy to see that  
$$
S_{(l_1,\ldots,l_i,\ldots,l_k)}[d] = S_{(l_1,\ldots,\frac{d_i|l|}{M},\ldots,l_k)}[d] = \frac{M}{|l|}.
$$
At the same time, $\frac{d_i|l|}{M}>l_i$, therefore the replacement $l_i\mapsto \frac{d_i|l|}{M}$ increases the value of the function $|l|S_l[d]$. We arrive at a contradiction. 

{\bf Claim 2.} $S_{(l_1,\ldots,l_k)}[d] > S_{(l_1,\ldots,l_k)}[d-1]$. The proof of this claim is similar to the previous one. Indeed, by previous claim $l_1=\frac{d_1|l|}{M}$. Then, arguing by contradiction, one has $S_{l}[d] = S_l[d-1]= S_{(l'_1,\ldots,l_k)}[d]$, where $l'_1 = \frac{(d_1+1)|l|}{M}$. Once again $l'_1>l_1$ and we arrive at a contradiction. The component $l_1$ was chosen only for the notation convenience, the same argument follows through for any $l_i$.

According to the first claim, $l_i = \frac{d_i|l|}{M}$. Hence, for each $i$ the sequence $T_i$ has exactly $d_i - 1$ entries less than $\frac{M}{l}$. Therefore, the sequence $S_l$ contains $\Sigma(d_i-1)$ entries less than $\frac{M}{|l|}$. Since $S_l[d] = \frac{M}{|l|}$, Claim 2 implies 
\begin{equation}
\label{sum}
d = \Sigma_{i=1}^k (d_i-1) + 1.
\end{equation}
At the same time, $\Sigma\, l_i = |l|$, which implies $\Sigma \,d_i = M$. Substituting this into equality~(\ref{sum}) one obtains $M = d+k-1$. Additionally, note that this argument provides an explicit formula for all maximizers. Namely, any maximizer $l$ should have the following form 
$$
l = \frac{|l|}{d+k-1}(d_1,\ldots,d_k),
$$
where $d_i\in\mathbb{N}$ are such that $\Sigma\, d_i = d+k-1$. It is easy to see that any such vector satisfies $|l|S_{l}[d] = d+k-1$. Regardless of the existence of maximizer the arguments above prove that
\begin{equation}
\label{maximizer}
\sup\limits_{l} |l|S_{l}[d] \geqslant d+k-1,
\end{equation}
where one has equality provided the existence of a maximizer.

Let us turn to proving that the supremum is achieved. First, note that for any positive constant $\lambda>0$, one has $S_{\lambda l} = \frac{1}{\lambda}S_{l}$, therefore the function $|l|S_{l}[d]$ is scale-invariant for any $d$. Therefore, without loss of generality we may assume that $|l| = l_1+\ldots+l_k = 1$. Although the function $S_l[d]$ is obviously continuous on $L_k$, the space $L_k $ of vectors $l$ of unit length is not compact as $l_i$ are required to be positive. To circumvent this difficulty one needs to control the behaviour of the functional $S_{l}[d]$ near the boundary $\partial L_k$. 
To that end it is useful to note that $S_l[d]$ is bounded as it follows from the inequality $S_l[d]\leqslant T_i[d] = \frac{d}{l_i}$ which holds for all $i$. Since $|l| = 1$ there exists an index $i$ with $l_i\geqslant \frac{1}{k}$ and, therefore,
\begin{equation}
\label{boundedness}
S_l[d]\leqslant kd.
\end{equation}

The space $L_k$ is an interior of a $(k-1)$-simplex whose boundary is a union of faces $L_k^{(i)} = \{l\,|\, l_i = 0\}$. Consider an open neighbourhood $U^{(i)}\subset L_k$ of $L_k^{(i)}$ defined by $U^{(i)} = \{l\,|\, 0<l_i<\frac{1}{kd}\}$. Then one has $T_i[1]>kd\geqslant S_l[d]$ by inequality~(\ref{boundedness}). Thus, the presence of that $l_i$ does not affect $S_l[d]$ and setting $l^{(i)} = (l_1,\ldots,\hat l_i,\ldots,l_k)\in \mathbb{R}^{k-1}$ (here we use the hat symbol to denote the omission) one has the following inequality for all $l\in U^{(i)}$,
\begin{equation}
\label{induction}
S_l[d] =S_{l^{(i)}}[d] = \frac{1}{1-l_i}(|l^{(i)}|S_{l^{(i)}}[d])< \frac{kd}{kd-1}(|l^{(i)}|S_{l^{(i)}}[d]).
\end{equation}

Now we collect the facts to complete the proof.
We proceed by induction on the number of boundary components $k$. The base $k = 1$ is obvious since in that case the space $L_1$ is a single point. The step of induction guarantees that for $l'\in\left(\mathbb{R}_{>0}\right)^{k-1}$ one has $\sup |l'|S_{l'}[d] = d+k-2$. Combining it with the right hand side of inequality~(\ref{induction}) one obtains that for $l\in U^{(i)}$ the following inequality holds,
$$
S_l[d]< \frac{kd}{kd-1}(k+d-2)\leqslant k+d-1\leqslant \sup\limits_{l\in\left(\mathbb{R}_{>0}\right)^k} |l|S_l[d] = \sup\limits_{l\in L_k} S_l[d],
$$
where we used inequality~(\ref{maximizer}) and obvious observations $d\geqslant 1$ and $k\geqslant 2$. Therefore, the supremum of $S_l[d]$ over $L_k$ coincides with the supremum of $S_l[d]$ over compact subset $L_k\backslash\cup_{i=1}^kU^{(i)}$, i.~e. supremum is achieved and is equal to $d+k-1$ by inequality~(\ref{maximizer}).
\end{proof}

Note that the above proposition is purely algebraic. Nevertheless, as a corollary we obtain Theorem~\ref{Th2}.

\section{Proof of Theorem~\ref{MainTheorem} in the orientable case}

\label{proof}
The proof is obtained by an easy combination of Theorem~\ref{ThYY} and Theorem~\ref{ThRS}. First of all, let us make a couple of observations.

{\bf Observation 1.} Applying Hodge $*$-operator to both sides of formula~(\ref{Bochner}) implies that $W^{[n-p]}* = *W^{[p]}$. Indeed, both $\Delta$ and $\nabla^*\nabla$ are formally self-adjoint, which implies that they commute with $*$, therefore so does $W$. Thus, nonnegativity of $W^{[p]}$ is equivalent to nonnegativity of $W^{[n-p]}$.

{\bf Observation 2.} Application of Theorem~\ref{ThRS} for $p=n-2$ now yields that under the conditions of Theorem~\ref{MainTheorem} one has the inequality
$$
\sigma_1^{(n-2)} \geqslant \frac{n-1}{n-2}c
$$
if $n\geqslant 4$ and 
$$
\sigma_1^{(1)} > \frac{3}{2}c
$$
if $n=3$. In particular, by the remark after Theorem~\ref{ThRS} it yields $b_{n-2} = 0$. Similarly, since $W^{[2]}\geqslant 0$ implies $Ric = W^{[1]}\geqslant 0$ and $c_{n-2}>0$ implies $c_{n-1}>0$, we can apply Theorem~\ref{ThRS} for $p=n-1$ and conclude that $b_{n-1} = 0$.

Taking into account $b_{n-2} = b_{n-1} = 0$, an application of Theorem~\ref{ThYY} for $q=1$ yields the inequality
$$
\sigma_m\sigma_1^{(n-2)}\leqslant\lambda_m.
$$
Combining this inequality with inequlities from Observation 2, one completes the proof of Theorem~\ref{MainTheorem} for orientable manifolds.

\section{Non-orientable manifolds}

It is possible to generalize Theorem~\ref{ThYY} to include the case of non-orientable manifolds in a way that allows us to apply the arguments of the previous section. Below we give the statement and the short outline of the argument.

Let us pass to an orientable double cover $\pi\colon\tilde M\to M$ and endow it with the metric $\pi^*g$ such that the involution $\tau$ exchanging the leaves of $\pi$ is an isometry. Then $\tau$ induces the decomposition of differential forms on $\tilde M$ into even and odd with respect to $\tau$ which is compatible with Hodge-Morrey decomposition (for details on Hodge-Morrey decomposition see~\cite{YY}). Similarly, the eigenvalues of Dirichlet-to-Neumann and Laplace operators are divided into those corresponding to odd and even eigenforms respectively. If we denote by $\lambda_{i,even}$ the $i$-th even eigenvalue of $\Delta$ on $C^\infty(\partial \tilde M)$ and similarly by $\sigma^{(p)}_{j,odd}$ the $j$-th odd Steklov eigenvalue on $\Omega^p(\partial \tilde M)$, then inequality~(\ref{ThYYeq}) for $\tilde M$ becomes
\begin{equation}
\label{nonorientable}
\sigma^{(0)}_{m,even}\sigma^{(n-2)}_{(b_{n-2}(\tilde M) - b_{n-2}(M))+r, odd}\leqslant \lambda_{m+r+b_{n-1}(M)-1, even}.
\end{equation}

In order to prove~(\ref{nonorientable}) one follows the proof of Theorem~\ref{ThYY} accounting for the presence of $\tau$. The key points are the following:
\begin{itemize}
\item differential $d$ is compatible with the decomposition into odd and even forms;
\item Hodge star operator sends odd forms to even and vice versa;
\item Since the kernel of $\mathcal{D}^{(p)}$ consists of Neumann harmonic $p$-fields and even Neumann harmonic fields on $\tilde M$ are the pullbacks of Neumann harmonic fields on $M$, then $\dim(\ker \mathcal{D}^{(p)}\cap \Omega^p_{even}) = b_p(M)$, $\dim(\ker \mathcal{D}^{(p)}\cap \Omega^p_{odd}) = b_p(\tilde M) - b_p(M)$.
\end{itemize}

To complete the proof of Theorem~\ref{MainTheorem} one makes the following two observations. First, local curvature conditions on $M$ pass to $\tilde M$, therefore Betti numbers disappear from the inequality~(\ref{nonorientable}). Second, there is an obvious inequality $\sigma^{(n-2)}_{1,odd}\geqslant\sigma^{(n-2)}_1$. Since even eigenvalues of $\tilde M$ coincide with the eigenvalues of $M$, the same arguments as in Section~\ref{proof} conclude the proof.

\section{Comparison with earlier results}

\label{comparison}
In this section we compare results of Theorem~\ref{MainTheorem} with the following statement.
\begin{theorem}[Wang, Xia~\cite{WX}]
\label{ThWX}
Let $(M,g)$ be an $n$-dimensional compact connected Riemannian manifold with non-negative Ricci curvature and boundary $\partial M$. Assume that the principal curvatures of $\partial M$ are bounded from below by a positive constant $c$. Then one has the inequality
\begin{equation}
\label{ThWXeq}
\sigma_2(M)\leqslant \frac{\sqrt{\lambda_2(\partial M)}}{(n-1)c}\left(\sqrt{\lambda_2(\partial M)} + \sqrt{\lambda_2(\partial M) - (n-1)c^2}\right).
\end{equation}
\end{theorem}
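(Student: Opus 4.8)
The plan is to run a Reilly-formula argument on the harmonic extension of a first non-constant Laplace eigenfunction of the boundary. Write $\lambda = \lambda_2(\partial M)$. The hypotheses force $\partial M$ to be connected, so $\lambda$ is the first positive eigenvalue of $\Delta_{\partial M}$ and $\sigma_2(M)$ the first positive eigenvalue of $\mathcal{D}$; choose $f \in C^\infty(\partial M)$ with $\Delta_{\partial M} f = \lambda f$ and $\int_{\partial M} f^2 = 1$, so that automatically $\int_{\partial M} f = 0$. Let $\hat u \in C^\infty(M)$ be its harmonic extension, $\Delta_M \hat u = 0$, $\hat u|_{\partial M} = f$. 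Since $\int_{\partial M}\hat u = 0$, the function $\hat u$ is admissible in the variational characterization of $\sigma_2(M)$, and Green's identity gives
$$
\sigma_2(M) \leqslant \frac{\int_M |\nabla \hat u|^2}{\int_{\partial M} f^2} = \int_M |\nabla \hat u|^2 = \int_{\partial M} f\,\partial_n \hat u =: E .
$$

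Next I would substitute $\hat u$ into the classical Reilly formula; since $\Delta_M \hat u = 0$, the identity equates its left-hand side to $-\int_M |\nabla^2\hat u|^2 \leqslant 0$, giving
$$
\int_M \mathrm{Ric}(\nabla \hat u,\nabla \hat u) + \int_{\partial M}\Big( H\,(\partial_n \hat u)^2 + 2\,\partial_n \hat u\,\Delta_{\partial M}\hat u + \mathrm{II}(\nabla_{\partial M}\hat u,\nabla_{\partial M}\hat u)\Big) \leqslant 0,
$$
where $H = \eta_1 + \cdots + \eta_{n-1}$ is the mean curvature of $\partial M$ and $\mathrm{II}$ its second fundamental form. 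Now I would discard the first term using $\mathrm{Ric}\geqslant 0$; estimate $\int_{\partial M} H\,(\partial_n \hat u)^2 \geqslant (n-1)c\int_{\partial M}(\partial_n \hat u)^2$ from $H\geqslant (n-1)c$; evaluate $\int_{\partial M}\partial_n \hat u\,\Delta_{\partial M}\hat u$ as $-\lambda E$ once the sign of $\Delta_{\partial M}$ is matched to the convention $\lambda\geqslant 0$; and bound $\int_{\partial M}\mathrm{II}(\nabla_{\partial M}\hat u,\nabla_{\partial M}\hat u)\geqslant c\int_{\partial M}|\nabla_{\partial M} f|^2 = c\lambda$ from the principal-curvature hypothesis and $\int_{\partial M}|\nabla_{\partial M}f|^2 = \lambda$. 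Collecting,
$$
(n-1)c\int_{\partial M}(\partial_n \hat u)^2 - 2\lambda E + c\lambda \leqslant 0 .
$$

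To close the argument, Cauchy-Schwarz on $\partial M$ gives $E = \int_{\partial M} f\,\partial_n \hat u \leqslant \big(\int_{\partial M}(\partial_n \hat u)^2\big)^{1/2}$, hence $\int_{\partial M}(\partial_n \hat u)^2 \geqslant E^2$; substituting reduces everything to the scalar quadratic inequality $(n-1)c\,E^2 - 2\lambda E + c\lambda \leqslant 0$ in the single unknown $E\geqslant 0$. Its discriminant $4\lambda^2 - 4(n-1)c^2\lambda$ is therefore nonnegative (which incidentally recovers the Lichnerowicz-type bound $\lambda\geqslant (n-1)c^2$), and $E$ does not exceed the larger root, so
$$
\sigma_2(M) \leqslant E \leqslant \frac{\lambda + \sqrt{\lambda}\,\sqrt{\lambda-(n-1)c^2}}{(n-1)c} = \frac{\sqrt{\lambda}}{(n-1)c}\left(\sqrt{\lambda}+\sqrt{\lambda-(n-1)c^2}\right),
$$
which is precisely inequality~(\ref{ThWXeq}).

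The one place that really needs care is the sign and normalisation bookkeeping --- reconciling the conventions in the Reilly formula (choice of sign for the Laplacian, normalisation of $H$ and $\mathrm{II}$) with the paper's convention $\lambda\geqslant 0$, and checking that $\hat u$ is genuinely orthogonal to $\ker\mathcal{D}$. A good sanity check is the Euclidean ball $\mathbb{B}^n(1/c)$: there $\lambda = (n-1)c^2$, the discriminant vanishes, $\partial_n\hat u$ is a constant multiple of $f$ so the Cauchy-Schwarz step is an equality, and every remaining estimate is an equality as well, yielding $\sigma_2 = E = c$.
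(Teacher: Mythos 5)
Your argument is correct, and it is essentially the original proof of Wang and Xia: the paper itself only quotes this theorem from~\cite{WX} for comparison purposes and does not reprove it, and the argument in~\cite{WX} is exactly your Reilly-formula computation applied to the harmonic extension of a first nonconstant boundary eigenfunction, followed by Cauchy--Schwarz and the resulting quadratic inequality in $E$. The only point you assert rather than prove is the connectedness of $\partial M$; this is a standard consequence of $\mathrm{Ric}\geqslant 0$ together with strict convexity of the boundary (and, as a sanity check, it can also be extracted from your own inequality: if $\partial M$ were disconnected one could take $\lambda=0$ with a locally constant mean-zero $f$, forcing $\partial_n\hat u\equiv 0$ and hence $\hat u$ constant, a contradiction), so the proposal stands as written.
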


We start our comparison with the case $n=3$, where conditions of our theorem coincide with conditions of Theorem~\ref{ThWX} (since $W^{[1]} = *W^{[2]}* = Ric$). Theorem~\ref{MainTheorem} yields for $m=2$ and $n=3$
\begin{equation}
\label{1}
\sigma_2(M)<\frac{2}{3c}\lambda_2(\partial M),
\end{equation}
while inequality~(\ref{ThWXeq}) for $n=3$ becomes
\begin{equation}
\label{2}
\sigma_2(M)\leqslant \frac{\sqrt{\lambda_2(\partial M)}}{2c}\left(\sqrt{\lambda_2(\partial M)} + \sqrt{\lambda_1(\partial M) - 2c^2}\right).
\end{equation}

It is easy to see now that the inequality~(\ref{1}) yields a better bound once $\lambda_2(\partial M)\geqslant \frac{9}{4}c^2$. 
\begin{example}
Let $M = \left.\{(x,y,z)\in\mathbb{R}^3\right|\, \frac{x^2}{a^2} + \frac{y^2}{b^2} + \frac{z^2}{c^2} \leqslant 1\}$. Then $Ric(M) = 0$ and $M$ is convex, i.e. it satisfies the conditions of the theorem. If $a>b>c$ the lower bound for principal curvatures is $\frac{c}{a^2}$ while the lower bound $K_0$ for Gaussian curvature of $\partial M$ is $\frac{c^2}{a^2b^2}$. Using the classical bound $\lambda_2(\partial M)\geqslant 2 K_0$ (see e.~g. p. 186 of~\cite{Chavel}) one sees that the bound given by Theorem~\ref{MainTheorem} is better once $b^2\leqslant \frac{8}{9}a^2$, i.e. once the ellipsoid is oblong enough.
\end{example} 

In case $n\geqslant 4$ our condition $W^{[2]}\geqslant 0$ implies condition $Ric\geqslant 0$ of Theorem~\ref{ThWX}. At the same time, the assumption "principle curvatures are bounded from below by $c>0$" of Theorem~\ref{ThWX} implies the the lowest $(n-2)$-curvature is bounded from below by $(n-2)c$. Thus, while our condition on the interior curvature is stronger, the condition on the curvature of $\partial M$ is weaker. Now, suppose that $n\geqslant 4$ and $(M,g)$ satisfies the conditions of both Theorem~\ref{MainTheorem} and Theorem~\ref{ThWX}, i.e. $W^{[2]}\geqslant 0$ and all principle curvatures of $\partial M$ are bounded from below by $c>0$, then the lowest $(n-2)$-curvature is bounded from below by $(n-2)c$. An application of Theorem~\ref{ThWX} yields inequality~(\ref{ThWXeq}), which can be rewritten as
$$
\sigma_2\leqslant \frac{\lambda_2}{(n-1)c} + \frac{\sqrt{\lambda_2(\lambda_2-(n-1)c^2)}}{(n-1)c},
$$
while Theorem~\ref{MainTheorem} for $m=1$ gives 
$$
\sigma_2\leqslant \frac{\lambda_2}{(n-1)c},
$$
which is clearly stronger. The right hand sides of those inequalities differ by an expression, which is equal to zero iff $\lambda_2 = (n-1)c^2$. According to the results of Xia~\cite{Xia} the latter happens only if $M$ is a Euclidean ball of radius $\frac{1}{c}$. Therefore, our estimate yields a better bound for any convex bounded domain of the Euclidean space other than a ball. Moreover, our estimate can also be applied to nonconvex domains, since our condition on the curvature of $\partial M$ is only $c_{n-2}>0$.

\subsection*{Acknowledgements.} The author is grateful to Iosif Polterovich for fruitful discussions and comments on the initial versions of the manuscript. The author thanks Spiro Karigiannis for providing the reference~\cite{Peterson}.

This research was partially supported by Tomlinson Fellowship. This work is a part of the author's PhD thesis at McGill University under the supervision of Dmitry Jakobson and Iosif Polterovich.

\end{document}